\documentclass[11pt]{article}
\UseRawInputEncoding

\usepackage{mathrsfs}
\usepackage{amssymb}
\usepackage{amsmath}
\usepackage{amsbsy}
\usepackage{epsfig}
\usepackage{enumerate}
\usepackage{bm}
\usepackage{color}
\usepackage[english]{babel}

\usepackage{bbm}
\usepackage{mathrsfs}
\usepackage{dsfont}
\usepackage{graphicx,theorem}

\makeatletter
\newcommand{\rmnum}[1]{\romannumeral #1}
\newcommand{\Rmnum}[1]{\expandafter\@slowromancap\romannumeral #1@}
\makeatother

\setlength{\topmargin}{0cm} \setlength{\oddsidemargin}{0cm} \setlength{\evensidemargin}{0cm} \setlength{\textwidth}{15truecm}
\setlength{\textheight}{22.8truecm}

\newtheorem{lem}{Lemma}[section]  \newtheorem{thm}{Theorem}[section]
\newtheorem{cor}{Corollary}[section] \newtheorem{defn}{Definition}[section]  
\numberwithin{equation}{section}
\newenvironment{proof}{{\noindent\it Proof}\quad}{\hfill $\square$\par}

\usepackage{soul}
\usepackage{color, xcolor}
\soulregister\cite7
\soulregister\ref7

 \newcommand{\me}{\mathrm{e}} 
\newcommand{\dif}{\mathrm{d}} \DeclareMathAlphabet{\mathsfsl}{OT1}{cmss}{m}{sl} \DeclareMathAlphabet{\mathpzc}{OT1}{pzc}{m}{it}

    \newcommand{\ee}{\mathbb{E}}
   \newcommand{\nn}{\mathbb{N}} \newcommand{\rr}{\mathbb{R}}
    
\newcommand{\vv}{\mathbb{V}}
 \def\CC{\mathcal C}   \def\FF{\mathcal F}  \def\HH{\mathcal H}

\def\d"{^{\prime\prime}} \def\d'{^{\prime}}

\begin{document}

	\begin{center}{\LARGE\bf Complete moment convergence of moving average processes for $m$-widely acceptable sequence under sub-linear expectations}
	\end{center}
	

	
	\begin{center}
		Mingzhou Xu~*\footnote{*correspondence: mingzhouxu@whu.edu.cn} \quad Xuhang Kong~\footnote{Email: 869458367@qq.com} \quad 
		\\
		School of Information Engineering, Jingdezhen Ceramic University\\
		Jingdezhen 333403, China
	\end{center}
	
	\renewcommand{\abstractname}{~}
	\begin{abstract}
		{\bf Abstract:}
		In this article, the complete moment convergence for the partial sum of moving average processes $\{X_n=\sum_{i=-\infty}^{\infty}a_iY_{i+n},n\ge 1\}$ is estabished under some proper conditions, where $\{Y_i,-\infty<i<\infty\}$ is a sequence of $m$-widely acceptable ($m$-WA) random variables, which is stochastically dominated by a random variable $Y$ in sub-linear expectations space $(\Omega,\HH,\ee)$ and $\{a_i,-\infty<i<\infty\}$ is an absolutely summable sequence of real numbers. The results extend the relevant results in probability space to those under sub-linear expectations.
		
		{\bf Keywords:}  $m$-widely acceptabl random variables; Moving average processes; Complete convergence; Complete moment convergence; Sub-linear expectation
		
		{\bf 2020 Mathematics Subject Classifications:} 60F15, 60F05
		\vspace{-3mm}
	\end{abstract}

	\section{Introduction }
	In order to study the uncertainty in probability, Peng \cite{Peng2007,Peng2010,Peng2019} introduced the concepts of the sub-linear expectations space. Motivated by the works of Peng \cite{Peng2007,Peng2010,Peng2019}, lots of people try to extend the results of classic probability space to those of the sub-linear expectations space. Zhang \cite{Zhang2016a,Zhang2016b,Zhang2015} got the exponential inequalities, Rosenthal's inequalities, and Donsker's invariance principle under sub-linear expectations. Under sub-linear expectations, Xu and Cheng \cite{Xu2022a} studied  how small the increments of $G$-Brownian motion are. Xu and Zhang \cite{Xujiapan2019,Xujiapan2020} got a three series theorem of independent random variables and  a law of logarithm for arrays of row-wise extended negatively dependent random variables under the sub-linear expectations. Zhong and Wu \cite{Zhong2017} obtained  the complete convergence and complete moment convergence for weighted sums of extended negatively dependent random variables under sub-linear expectations. For more limit theorems under sub-linear expectations, the interested readers could refer to Wu and Jiang \cite{Wuqunying2018}, Zhang and Lin \cite{Zhang2018}, Zhong and Wu \cite{Zhong2017}, Hu and Yang \cite{Huzechun2017}, Chen \cite{Chen2016},   Zhang \cite{Zhang2016c},  Hu, Chen, and Zhang \cite{Hufeng2014}, Gao and Xu \cite{Gao2011},  Kuczmaszewska \cite{Kuczm}, Chen and Wu \cite{Chen2022}, Xu and Cheng \cite{Xu2021b,Xu2022a}, Xu et al. \cite{Xu2022b,Xu2023b}, Xu and Kong \cite{Xu2023}, and references therein.
	
	Guan, Xiao and Zhao \cite{guan2021complete} studied complete moment convergence of moving average processes for $m$-WOD sequence. For more results about complete moment convergence of moving average processes, the interested reader could refer to Zhang and Ding \cite{Zhang-Ding2017}, Hosseini and Nezakati \cite{Hosseni2020} and refercences therein. The main conclusions of Guan, Xiao and Zhao \cite{guan2021complete} are that under proper conditions the complete moment convergence for the partial sum of moving average processes produced by $m$-widely orthant dependent random variables holds. Recently, Wu, Deng, and Wang studied capacity inequalities and strong laws for $m$-widely acceptable ($m$-WA) random variables under sub-linear expectations. It is natural to wonder whether or not the relevant results of  Guan, Xiao and Zhao \cite{guan2021complete} hold for moving average processes produced by $m$-WA random variables under sub-linear expectations. Here, we try to get the complete moment convergence for the partial sum of moving average processes produced by $m$-WA random variables under sub-linear expectations, complementing the relevant results obtained in Guan, Xiao and Zhao \cite{guan2021complete}.
	
	We organize the rest of this paper as follows. We give some necessary basic notions, concepts and corresponding  properties, and cite necessary lemma under sub-linear expectations in the next section.  In Section 3, we give our main results, Theorems \ref{thm01}-\ref{thm02}, the proofs of which are also presented in this section.

	\section{Preliminaries}
	As in Xu and Cheng \cite{Xu2021b}, we use similar notations as in the work by Peng \cite{Peng2010,Peng2019}, Chen \cite{Chen2016}, and Zhang \cite{Zhang2016b}. Suppose that $(\Omega,\FF)$ is a given measurable space. Assume that $\HH$ is a subset of all random variables on $(\Omega,\FF)$ such that $X_1,\cdots,X_n\in \HH$ implies $\varphi(X_1,\cdots,X_n)\in \HH$ for each $\varphi\in \CC_{l,Lip}(\rr^n)$, where $\CC_{l,Lip}(\rr^n)$ represents the linear space of (local lipschitz) function $\varphi$ fulfilling
	$$
	|\varphi(\mathbf{x})-\varphi(\mathbf{y})|\le C(1+|\mathbf{x}|^m+|\mathbf{y}|^m)(|\mathbf{x}-\mathbf{y}|), \forall \mathbf{x},\mathbf{y}\in \rr^n
	$$
	for some $C>0$, $m\in \nn$ depending on $\varphi$.
	\begin{defn}\label{defn1} A sub-linear expectation $\ee$ on $\HH$ is a functional $\ee:\HH\mapsto \bar{\rr}:=[-\infty,\infty]$ fulfilling the following properties: for all $X,Y\in \HH$, we have
		\begin{description}
			\item[\rm (a)]  Monotonicity: If $X\ge Y$, then $\ee[X]\ge \ee[Y]$;
			\item[\rm (b)] Constant preserving: $\ee[c]=c$, $\forall c\in\rr$;
			\item[\rm (c)] Positive homogeneity: $\ee[\lambda X]=\lambda\ee[X]$, $\forall \lambda\ge 0$;
			\item[\rm (d)] Sub-additivity: $\ee[X+Y]\le \ee[X]+\ee[Y]$ whenever $\ee[X]+\ee[Y]$ is not of the form $+\infty-\infty$ or $-\infty+\infty$.
		\end{description}
		
	\end{defn}
	
	A set function $V:\FF\mapsto[0,1]$ is named to be a capacity if
	\begin{description}
		\item[\rm (a)]$V(\emptyset)=0$, $V(\Omega)=1$;
		\item[\rm (b)]$V(A)\le V(B)$, $A\subset B$, $A,B\in \FF$.\\
	\end{description}
	A capacity $V$ is called sub-additive if $V(A\bigcup B)\le V(A)+V(B)$, $A,B\in \FF$.
	
	In this sequel, given a sub-linear expectation space $(\Omega, \HH, \ee)$, set $\vv(A):=\inf\{\ee[\xi]:I_A\le \xi, \xi\in \HH\}=\ee[I_A]$, $\forall A\in \FF$ (see (2.3) and the definitions of $\vv$ above (2.3) in Zhang \cite{Zhang2016a}). $\vv$ is a sub-additive capacity. Set
	$$
	C_{\vv}(X):=\int_{0}^{\infty}\vv(X>x)\dif x +\int_{-\infty}^{0}(\vv(X>x)-1)\dif x.
	$$
	As in 4.3 of Zhang \cite{Zhang2016a}, throughout this paper, define an extension of $\ee$ on the space of all random variables by
	$$
	\ee^{*}(X)=\inf\left\{\ee[Y]:X\le Y,Y\in\HH\right\}.
	$$
	Then $\ee^{*}$ is a sublinear expectation on the space of all random variables, $\ee[X]=\ee^{*}[X]$, $\forall X\in \HH$, and $\vv(A)=\ee^{*}(I_A)$, $\forall A\in \FF$.
	
	\begin{defn}\label{defn02}
		Suppose $\{Y_n,n\ge 1\}$ is a sequence of random variables in sub-linear expectations space $(\Omega,\HH,\ee)$. $\{Y_n,n\ge 1\}$ is called to be widely acceptable (WA), if there exists a positive sequence $\{g(n),n\ge 1\}$ of dominating coefficients  such that for all $n\in \mathbb{N}$, we have
		\begin{equation}\label{01}
			\ee\exp\left(\sum_{i=1}^{n}a_{ni}f_i(Y_i)\right)\le g(n)\prod_{i=1}^{n}\ee\exp\left(a_{ni}f_i(Y_i)\right), \quad 0<g(n)<\infty,
		\end{equation}
		where $\{a_{ni},1\le i\le n,n\ge1\}$ is an array of nonnegative constants and $f_i(\cdot)\in C_{b,Lip}(\rr)$, $i=1,2,\ldots$, are all non-decreasing ( or all non-increasing ) real valued truncation functions.
	\end{defn}
	\begin{defn}\label{defn03}
		Let $m\ge 1$ be fixed integer. A sequence of random variables $\{X_n,n\ge 1\}$ is called to be $m$-wildely acceptable ($m$-WA), if for any $n\ge 2$, and $i_1,i_2,\cdots,i_n$ fulfilling $|i_k-i_j|\ge m$ for all $1\le k\not= j\le n$, we have $X_{i_1}, X_{i_2}, \cdots, X_{i_n}$ are WA.
	\end{defn}
	\begin{defn}\label{defn-stoch} We say that $\{Y_n;n\ge 1\}$ is stochastically dominated by a random variable $Y$ in $(\Omega, \HH, \ee)$, if there exists a constant $C$ such that $\forall n\ge 1$, for all non-negative $h\in \CC_{l,Lip}(\rr)$, $\ee(h(Y_n))\le C \ee(h(Y)).$	
	\end{defn}
	
	Assume that $\mathbf{X}_1$ and $\mathbf{X}_2$ are two $n$-dimensional random vectors defined, respectively, in sub-linear expectation spaces $(\Omega_1,\HH_1,\ee_1)$ and $(\Omega_2,\HH_2,\ee_2)$. They are called identically distributed if for every function $\psi\in\CC_{l,Lip}(\rr)$ such that $\psi(\mathbf{X}_1)\in \HH_1, \psi(\mathbf{X}_2)\in \HH_2$,
	$$
	\ee_1[\psi(\mathbf{X}_1)]=\ee_2[\psi(\mathbf{X}_2)], \mbox{  }
	$$
	whenever the sub-linear expectations are finite. $\{X_n\}_{n=1}^{\infty}$ is named to be identically distributed if for each $i\ge 1$, $X_i$ and $X_1$ are identically distributed.
	
	In the paper we assume that $\ee$ is countably sub-additive, i.e., $\ee(X)\le \sum_{n=1}^{\infty}\ee(X_n)$, whenever $X\le \sum_{n=1}^{\infty}X_n$, $X,X_n\in \HH$, and $X\ge 0$, $X_n\ge 0$, $n=1,2,\ldots$. Hence $\ee^{*}$ is also countably sub-additive. Let $C$ stand for a positive constant which may change from place to place. $I(A)$ or $I_A$ represent the indicator function of $A$. Write $\log (x)=\ln\max\{\me, x\}$, $x>0$.

	We cite the following lemma (cf. Lemma 2.2 of Xu et al. \cite{Xu2023b}).
	\begin{lem}\label{lem01+}If for a random variable  $X$ on $(\Omega,\FF)$, $C_{\vv}\{|X|\}<\infty$, then
		\[
		\ee^{*}[|X|]\le C_{\vv}\{|X|\}.
		\]
	\end{lem}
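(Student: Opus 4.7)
The plan is to dominate $|X|$ pointwise from above by a discretised sum of indicator functions and then leverage the countable sub-additivity of $\ee^{*}$ together with the identity $\vv(A)=\ee^{*}(I_A)$ to compare $\ee^{*}[|X|]$ with an upper Darboux sum of the integral defining $C_{\vv}(|X|)$. The argument is the standard layer-cake estimate adapted to the sub-linear expectation setting.

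For each integer $n\ge 1$ I would begin with the elementary pointwise inequality
$$|X|\le \frac{1}{n}\sum_{k=1}^{\infty} I\!\left(|X|>\frac{k-1}{n}\right),$$
which holds because on the set $\{(k-1)/n<|X|\le k/n\}$ the right-hand side equals $k/n\ge |X|$. Positive homogeneity and countable sub-additivity of $\ee^{*}$ (granted in the preliminaries), together with $\ee^{*}(I_A)=\vv(A)$, then yield
$$\ee^{*}[|X|]\le \frac{1}{n}\vv(|X|>0)+\frac{1}{n}\sum_{k=1}^{\infty}\vv\!\left(|X|>\frac{k}{n}\right).$$

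Since $x\mapsto \vv(|X|>x)$ is non-increasing, each term $\frac{1}{n}\vv(|X|>k/n)$ is bounded above by $\int_{(k-1)/n}^{k/n}\vv(|X|>x)\,\dif x$, so the series on the right is at most $\int_{0}^{\infty}\vv(|X|>x)\,\dif x=C_{\vv}(|X|)$. Combined with $\vv(|X|>0)\le 1$, this gives $\ee^{*}[|X|]\le 1/n+C_{\vv}(|X|)$, and letting $n\to\infty$ delivers the claim. The only point that needs care is that countable sub-additivity must be applied to an \emph{infinite} sum of non-negative random variables that are not necessarily in $\HH$; this is exactly the extension from $\ee$ to $\ee^{*}$ described in the paragraph preceding the lemma, so no further ingredient is needed.
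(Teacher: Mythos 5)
The paper does not actually prove this lemma: it is cited verbatim from Lemma 2.2 of Xu et al.\ \cite{Xu2023b}, so there is no in-paper argument to compare against. Your proof is correct and self-contained: the pointwise bound $|X|\le \frac{1}{n}\sum_{k=1}^{\infty}I\left(|X|>\frac{k-1}{n}\right)$, positive homogeneity and countable sub-additivity of $\ee^{*}$ (which the paper grants, since $\ee$ is assumed countably sub-additive), the identity $\vv(A)=\ee^{*}(I_A)$, and the monotonicity of $x\mapsto\vv(|X|>x)$ to dominate the Riemann sum by $\int_{0}^{\infty}\vv(|X|>x)\,\dif x=C_{\vv}\{|X|\}$ all go through exactly as stated, with the residual term $\vv(|X|>0)/n\le 1/n$ vanishing as $n\to\infty$. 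This is the standard layer-cake argument for this inequality in the sub-linear expectation literature, so no gap remains.
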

	
	Next we cite and give some useful lemmas.
	\begin{lem}\label{lem01}(cf. Wu et al. \cite{wu2023capacity})Let $\{X_n,n\ge 1\}$ be a sequence of $m$-WA random variables with dominating coefficients $g(n)$. If $\{f_n(\cdot),n\ge 1\}$ are all non-decreasing (non-increasing), then $\{f_n(X_n),n\ge 1\}$ are still $m$-WA with dominating coefficients $\{g(n),n\ge 1\}$.	
	\end{lem}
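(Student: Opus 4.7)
The plan is to unpack the $m$-WA definition at indices $i_1,\dots,i_n$ with pairwise separation $\ge m$ and verify the WA inequality \eqref{01} for $\{f_n(X_n)\}$ by feeding composite test functions into the WA inequality already available for $\{X_n\}$. Concretely, fix such indices $i_1,\dots,i_n$, pick nonnegative coefficients $a_{nk}$ and a family $h_1,\dots,h_n\in C_{b,\mathrm{Lip}}(\rr)$ that is either entirely non-decreasing or entirely non-increasing, and consider the composites $\tilde h_k:=h_k\circ f_{i_k}$. What I would need to bound is
\[
\ee\exp\Bigl(\sum_{k=1}^{n}a_{nk}h_k(f_{i_k}(X_{i_k}))\Bigr)=\ee\exp\Bigl(\sum_{k=1}^{n}a_{nk}\tilde h_k(X_{i_k})\Bigr),
\]
and the goal is to bring this under $g(n)\prod_{k=1}^{n}\ee\exp(a_{nk}\tilde h_k(X_{i_k}))$.

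The key observation is the functoriality of monotonicity: if $\{f_n\}$ and $\{h_k\}$ carry the same monotonicity direction, then every $\tilde h_k$ is non-decreasing; if they carry opposite directions, then every $\tilde h_k$ is non-increasing. In either case $\{\tilde h_k\}_{k=1}^{n}$ is admissible as a uniformly monotone test family in Definition \ref{defn02}. Since $\{X_{i_1},\dots,X_{i_n}\}$ is WA with dominating coefficient $g(n)$ by the $m$-WA hypothesis and the spacing condition $|i_k-i_j|\ge m$, applying \eqref{01} to the variables $X_{i_k}$ with test functions $\tilde h_k$ immediately produces the desired inequality, completing the verification of Definition \ref{defn03} for $\{f_n(X_n)\}$ with the same coefficients $g(n)$.

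The one technical point that will need attention is that \eqref{01} demands test functions in $C_{b,\mathrm{Lip}}(\rr)$, and although $\tilde h_k$ is automatically bounded (since $h_k$ is) it need not be Lipschitz when $f_{i_k}$ is only monotone. I would handle this by a routine approximation: replace each $f_n$ by a sequence $f_n^{(p)}$ of bounded, monotone, Lipschitz functions (for instance piecewise linear interpolants on a dyadic grid truncated outside $[-p,p]$) that agree with $f_n$ at the grid points and converge pointwise to $f_n$ as $p\to\infty$. For each $p$ the composites $h_k\circ f_{i_k}^{(p)}$ lie in $C_{b,\mathrm{Lip}}(\rr)$ with the right common monotonicity, so the previous paragraph yields the WA inequality for them. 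Since $h_k$ is bounded, the integrands are uniformly bounded and dominated convergence under the countably sub-additive $\ee$ (applied to both sides) lets us pass to the limit $p\to\infty$ and recover the inequality for $\tilde h_k$.

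The substantive content of the lemma is thus the composition-of-monotone-functions identity, and the main potential pitfall is the passage to the limit under the sub-linear expectation; this is the only step where one must invoke the countable sub-additivity of $\ee$ together with boundedness of the exponentials. Once that is in hand, the $m$-WA property transfers to $\{f_n(X_n)\}$ with exactly the same dominating sequence $\{g(n)\}$, which is what is claimed.
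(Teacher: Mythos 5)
Your core argument is the right one and is exactly what the cited source does: the paper itself gives no proof of this lemma (it is quoted from Wu et al.\ \cite{wu2023capacity}), and the standard proof there is precisely to observe that the composites $\tilde h_k=h_k\circ f_{i_k}$ share a common monotonicity direction and to feed them as test functions into the WA inequality (\ref{01}) already available for $X_{i_1},\dots,X_{i_n}$. So the first two paragraphs of your proposal are correct and complete modulo the regularity of $\tilde h_k$.

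The place where your write-up has a genuine gap is the approximation step you add to handle non-Lipschitz $f_n$. Dominated convergence is \emph{not} valid for sub-linear expectations, and countable sub-additivity does not rescue it: take $\ee[X]=\sup_{n\in\nn}X(n)$ on sequences, which is countably sub-additive, and $X_p=I_{[p,\infty)}$; then $X_p\downarrow 0$ with $0\le X_p\le 1$, yet $\ee[X_p]=1$ for every $p$. Consequently, from $h_k\circ f_{i_k}^{(p)}\to h_k\circ f_{i_k}$ pointwise and boundedly you cannot conclude that either side of the WA inequality converges, and the passage to the limit $p\to\infty$ is unjustified as written. (A monotone or uniform approximation of the composite would let you use only the monotonicity and positive homogeneity of $\ee$, but a uniform Lipschitz approximation exists only when the composite is uniformly continuous, which fails if $f_n$ has jumps; so the issue is real, not cosmetic.) The clean resolution is to restrict the lemma to monotone $f_n\in\CC_{l,Lip}(\rr)$ (or bounded Lipschitz truncations), in which case $\tilde h_k\in C_{b,Lip}(\rr)$ automatically and no limiting argument is needed; this covers every use of the lemma in the paper, where the $f_n$ are the truncations $y\mapsto(-x)\vee(y\wedge x)$ and $y\mapsto y-(-x)\vee(y\wedge x)$.
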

	\begin{lem}\label{lem02}
		Let $0<t\le 1$ or $t=2$ and $\{X_n,n\ge 1\}$ be a sequence of WA random variables in sub-linear expectations space $(\Omega, \HH, \ee)$. Assume further that $\ee(X_n)\le 0$ for each $n\ge1$ when $t=2$.  Then for all $x>0$, and $y>0$,
		\begin{equation}\label{02}
			\vv\left(S_n\ge x\right)\le \sum_{i=1}^{n}\vv\left(X_i> y\right)+g(n)\exp\left(\frac{x}{y}-\frac{x}{y}\ln\left(1+\frac{xy^{t-1}}{\sum_{i=1}^{n}\ee|X_i|^t}\right)\right).
		\end{equation}
	\end{lem}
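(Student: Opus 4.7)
The plan is to combine a standard upper truncation with an exponential Chebyshev (Chernoff) bound, use the WA decoupling property to factorize the exponential moment into a product, and then choose the free parameter $h$ a priori to match the stated exponent.

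First I would truncate each summand from above by setting $\bar X_i := \min(X_i, y)$; since $u\mapsto \min(u,y)$ is Lipschitz and hence lies in $\CC_{l,Lip}(\rr)$, one has $\bar X_i \in \HH$. On the event that every $X_i\le y$ we have $\bar X_i = X_i$ and so $\sum_i\bar X_i = S_n$, which yields the set inclusion $\{S_n\ge x\} \subseteq \bigcup_{i=1}^n\{X_i>y\}\cup\{\sum_i \bar X_i\ge x\}$. Countable sub-additivity of $\vv$ then gives
\begin{equation*}
\vv(S_n\ge x) \le \sum_{i=1}^n \vv(X_i > y) + \vv\Bigl(\sum_{i=1}^n \bar X_i \ge x\Bigr).
\end{equation*}

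For the residual capacity I would apply the exponential Markov inequality with the parameter $h:= y^{-1}\ln(1 + xy^{t-1}/B_n)$, where $B_n := \sum_{i=1}^n \ee|X_i|^t$, chosen in advance to match the final exponent. Since the map $u\mapsto h\min(u,y)$ is non-decreasing but not bounded below, I would further truncate it from below at $-K$ to produce non-decreasing functions $f_i^{(K)}(u) := h\max(\min(u,y),-K)\in C_{b,Lip}(\rr)$; applying the WA condition \eqref{01} with $a_{ni}\equiv 1$ and then letting $K\to\infty$ via countable sub-additivity of $\ee$ yields
\begin{equation*}
\ee\bigl[\exp\bigl(h\textstyle\sum_i \bar X_i\bigr)\bigr] \le g(n)\prod_{i=1}^n \ee\bigl[\exp(h\bar X_i)\bigr].
\end{equation*}

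Each factor is then controlled by an elementary one-variable bound: for $0<t\le 1$ and $u\le y$, convexity of $e^{h\cdot}$ on $[0,y]$ together with $u/y\le (u/y)^t$ (and $e^{hu}\le 1$ for $u\le 0$) gives $e^{hu}\le 1 + (e^{hy}-1)|u|^t/y^t$; for $t=2$ and $u\le y$, Bennett's inequality gives $e^{hu}\le 1 + hu + (e^{hy}-1-hy)u^2/y^2$, and the assumption $\ee X_i \le 0$ kills the first-order term after taking $\ee$. Using $|\bar X_i|\le |X_i|$, both cases yield $\ee[\exp(h\bar X_i)]\le \exp\bigl((e^{hy}-1)\ee|X_i|^t/y^t\bigr)$, and the product over $i$ equals $\exp((e^{hy}-1)B_n/y^t)$. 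The chosen $h$ satisfies $e^{hy}-1 = xy^{t-1}/B_n$, so $(e^{hy}-1)B_n/y^t = x/y$ and $-hx = -(x/y)\ln(1 + xy^{t-1}/B_n)$, and everything assembles to the claimed right-hand side.

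The main obstacle I anticipate is the WA decoupling step: the natural truncation function $u\mapsto h\min(u,y)$ is not in $C_{b,Lip}(\rr)$, so the approximation by the $f_i^{(K)}$ above and the passage $K\to\infty$ must be carried out using the countable sub-additivity of $\ee$ assumed throughout the paper. Once the decoupling is in hand, the remaining Bennett/Chernoff bookkeeping is routine.
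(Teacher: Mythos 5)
Your proposal is correct and follows essentially the same route as the paper: truncate at $y$ via $\bar X_i=\min\{X_i,y\}$, split the event, apply the exponential Markov inequality, factor the exponential moment using the WA property, bound each factor by $\exp\bigl((\me^{hy}-1)\ee|X_i|^t/y^t\bigr)$, and substitute $h=y^{-1}\ln\bigl(1+xy^{t-1}/\sum_{i=1}^n\ee|X_i|^t\bigr)$. The only differences are that you treat $t=2$ directly via a Bennett-type bound where the paper simply cites Lemma 2.1 of Wu et al., and you explicitly address the technical point that $u\mapsto h\min(u,y)$ is unbounded below and hence not in $C_{b,Lip}(\rr)$ (the paper applies the WA factorization without comment); both are welcome refinements of the same argument.
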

	\begin{proof}
		If $0<t\le1$, then we can establish (\ref{02}) by the adapted proof of Theorem 2.1 of Shen \cite{shen2011probability}. If $t=2$, (\ref{02}) follows immediately from Lemma 2.1 of Wu et al. \cite{wu2023capacity}. For readers' convenience, here we give detailed proof when $0<t\le 1$.
		
		For $y>0$, write $\bar{X}_i=\min\{X_i,y\}$, $i=1,2,\cdots, n$, and $T_n=\sum_{i=1}^{n}\bar{X}_i$, $n\ge 1$. We easily see that 
		$$
		\left\{S_n\ge x\right\}=\left\{T_n\not= S_n\right\}\bigcup\left\{T_n\ge x\right\},
		$$
		which yields that for any positive $h$,
		\[\vv\left(S_n\ge x\right)\le \vv\left(T_n\not= S_n\right)+\vv\left(T_n\ge x\right)\le \sum_{i=1}^{n}\vv\left(X_i> y\right)+\me^{-hx}\ee\me^{hT_n}.
		\]
		It follows that 
		\begin{equation}\label{03}
			\vv\left(S_n\ge x\right)\le\sum_{i=1}^{n}\vv\left(X_i> y\right)+g(n)\me^{-hx}\prod_{i=1}^{n}\ee\me^{h\bar{X}_i}.
		\end{equation}
		For $0<t\le 1$, $h>0$, the function $\frac{\me^{hu}-1}{u^t}$ is increasing on $u>0$. Hence
		\begin{eqnarray*}
			\ee\me^{h\bar{X}_i}&\le&1+\ee\left(\frac{\me^{h\bar{X}_i}-1}{|\bar{X}_i|^t}|\bar{X}_i|^t\right)\le 1+\ee\left(\frac{\me^{hy}-1}{|y|^t}|\bar{X}_i|^t\right)\\
			&\le&1+\frac{\me^{hy}-1}{|y|^t}\ee\left(|\bar{X}_i|^t\right)\le \exp\left\{\frac{\me^{hy}-1}{|y|^t}\ee\left(|\bar{X}_i|^t\right)\right\}\\
			&\le&\exp\left\{\frac{\me^{hy}-1}{|y|^t}\ee\left(|X_i|^t\right)\right\}.
		\end{eqnarray*}
		Combining the inequality above and (\ref{03}) yields that 
		\begin{equation}\label{04}
			\vv\left(S_n\ge x\right)\le\sum_{i=1}^{n}\vv\left(X_i> y\right)+g(n)\exp\left\{\frac{\me^{hy}-1}{y^t}\sum_{i=1}^{n}\ee\left(|X_i|^t\right)-hx\right\}.
		\end{equation}
		Taking $h=\frac{1}{y}\log\left(1+\frac{xy^{t-1}}{\sum_{i=1}^{n}\ee\left(|X_i|^t\right)}\right)$ in the right-hand side of (\ref{04}), we obtain (\ref{02}).
	\end{proof}
	
	\begin{lem}\label{lem03}
		For a positive real number $q\ge 2$, if $\{X_n,n\ge 1\}$ is a sequence of $m$-WA random variables with dominating coefficients $\{g(n),n\ge 1\}$. If $C_{\vv}\left\{|X_i|^q\right\}<\infty$ for every $i\ge 1$, then for all $n\ge 1$, there exist positive constants $C_1(m,q)$, $C_2(m,q)$, and $C_3(m,q)$ depending on $q$ and $m$ such that 
		\begin{eqnarray*}
			&&\ee\left(\left|\sum_{i=1}^{n}X_i\right|^q\right)\le  C_1(m,q)\sum_{i=1}^{n}C_{\vv}\left\{|X_i|^q\right\}\\
			&&\quad+C_2(m,q)g(n)\left(\sum_{i=1}^{n}\ee X_i^2\right)^{q/2}+C_3(m,q)\left(\sum_{i=1}^{n}\left[|\ee(X_i)|+|\ee(-X_i)|\right]\right)^q.
		\end{eqnarray*}
	\end{lem}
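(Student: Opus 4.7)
The plan is a Rosenthal-type argument in two stages. First I will reduce from the $m$-WA hypothesis to WA by decomposing the index set into residue classes modulo $m$; then I will establish the inequality for a WA subsum by integrating the Bernstein-type tail bound of Lemma \ref{lem02}. For the reduction, partition $\{1,\dots,n\}$ into the residue classes $G_k=\{i:1\le i\le n,\ i\equiv k\pmod m\}$, $k=1,\dots,m$. Any two distinct indices in the same $G_k$ differ by at least $m$, so by Definition \ref{defn03} the subsequence $\{X_i\}_{i\in G_k}$ is WA with dominating coefficient $g(n)$. The power-mean inequality $|a_1+\cdots+a_m|^q\le m^{q-1}\sum_{k=1}^m|a_k|^q$ and the monotonicity of $\ee$ give
\[
\ee\Bigl|\sum_{i=1}^n X_i\Bigr|^q\le m^{q-1}\sum_{k=1}^m\ee\Bigl|\sum_{i\in G_k}X_i\Bigr|^q,
\]
so after absorbing $m^{q-1}$ into the constants it suffices to prove the bound for a generic WA subsum $S_N=\sum_{i=1}^N X_i$.

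For the WA case I use a two-sided centering: set $X_i'=X_i-\ee X_i$ and $X_i''=X_i+\ee(-X_i)$, and write $T'=\sum_{i=1}^N X_i'$, $T''=\sum_{i=1}^N(-X_i'')$. Being constant shifts, $X_i'$ and $X_i''$ are monotone functions of $X_i$, so by Lemma \ref{lem01} the sequences $\{X_i'\}$ and $\{-X_i''\}$ remain WA with the same $g(n)$; constant preservation yields $\ee X_i'=0$ and $\ee(-X_i'')=0$, precisely the hypotheses required to invoke Lemma \ref{lem02} with $t=2$. Starting from $|S_N|^q=(S_N^+)^q+(S_N^-)^q$ and using $(a+b)^+\le a^++b^+$ on the identities $S_N=T'+\sum_i\ee X_i$ and $-S_N=T''+\sum_i\ee(-X_i)$, I arrive at
\[
\ee|S_N|^q\le 2^{q-1}\bigl(\ee[((T')^+)^q]+\ee[((T'')^+)^q]\bigr)+2^q\Bigl(\sum_{i=1}^N\bigl[|\ee X_i|+|\ee(-X_i)|\bigr]\Bigr)^q,
\]
which places the centering discrepancy into the third term of the statement.

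To bound $\ee[((T')^+)^q]$ (the other expectation is symmetric), Lemma \ref{lem01+} and the layer-cake identity give $\ee[((T')^+)^q]\le q\int_0^\infty x^{q-1}\vv(T'>x)\dif x$, and Lemma \ref{lem02} at the truncation level $y=x/(q+1)$ controls the integrand by $\sum_i\vv(X_i'>y)+g(n)\exp\bigl((q+1)(1-\ln(1+x^2/((q+1)B_N)))\bigr)$ with $B_N=\sum_i\ee(X_i')^2$ comparable to $\sum_i\ee X_i^2$. Integrating against $x^{q-1}$, the truncation piece (after the substitution $u=x/(q+1)$) becomes a $q$-dependent multiple of $\sum_i C_\vv\{|X_i'|^q\}$, which is in turn bounded by a constant multiple of $\sum_i C_\vv\{|X_i|^q\}+(\sum_i|\ee X_i|)^q$, the second part being absorbed once more into the third term of the statement; the exponential piece (after $v=x^2/B_N$) reduces to $g(n)B_N^{q/2}$ times a finite $q$-dependent integral, producing the second term.

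The principal obstacle is the two-sided centering in the sub-linear setting: since $\ee X_i+\ee(-X_i)\ge 0$ can be strictly positive, no single recentering makes the centered variable satisfy $\ee(\cdot)\le 0$ on both tails of its distribution, so $X_i'$ and $X_i''$ (or equivalently the positive and negative parts of $S_N$) must be handled in parallel under Lemma \ref{lem02}. This is also why the third term of the statement features both $|\ee X_i|$ and $|\ee(-X_i)|$. A secondary issue is the calibration $y=x/(q+1)$: small enough that $\vv(X_i'>y)$ integrates against $x^{q-1}$ into $C_\vv\{|X_i|^q\}$, yet large enough that the Bernstein exponent reduces to a $q$-dependent constant times $1-\ln(1+x^2/((q+1)B_N))$, whose integral against $x^{q-1}$ is a finite multiple of $B_N^{q/2}$.
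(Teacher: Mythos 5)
Your proposal is correct and follows essentially the same route as the paper's proof: reduce $m$-WA to WA via residue classes modulo $m$ and the $C_r$ inequality, center so that Lemma \ref{lem02} with $t=2$ applies to the positive and negative parts separately (absorbing the centering cost $\sum_i[|\ee X_i|+|\ee(-X_i)|]$ into the third term), and integrate the resulting tail bound against $qx^{q-1}$ with truncation level $y$ proportional to $x$ to produce the $C_{\vv}$ and $g(n)(\sum_i\ee X_i^2)^{q/2}$ terms. The only differences are cosmetic (the paper takes $y=x/r$ and defers the residue-class reduction to the end, citing Fang et al.\ and Wu et al.\ for that step, which you instead spell out).
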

	\begin{proof}
		Note that 
		\[C_{\vv}\{|X^{+}|^q\}=\int_{0}^{\infty}\vv\left(|X^{+}|^q>x\right)\dif x=\int_{0}^{\infty}qx^{q-1}\vv\left\{|X^{+}|>x\right\}\dif x,\] where $X^{+}:=\max\{X,0\}$. We first suppose that   $\{X_n,n\ge 1\}$ is a sequence of WA random variables with dominating coefficients $\{g(n),n\ge 1\}$ and $\ee(X_n)\le 0$. Putting $y=x/r$ in (\ref{03}) yields 
		\begin{equation}\label{05}
			\vv\left(S_n^{+}\ge x\right)\le \sum_{i=1}^{n}\vv\left(X_i^{+}> y\right)+g(n)\me^r\left(\frac{r\sum_{i=1}^{n}\ee\left(|X_i|^2\right)}{r\sum_{i=1}^{n}\ee\left(|X_i|^2\right)+x^2}\right)^r.
		\end{equation}
		By the similar proof of (3.4) of Zhang \cite{Zhang2016c},  $n\ge 1$, multiplying both sides of (\ref{02}) by $qx^{q-1}$, and integrating on the half line, we have 
		\begin{eqnarray*}
			&&\ee\left(\left(S_n^{+}\right)^q\right)\le C_{\vv}\left\{\left(S_n^{+}\right)^q\right\}\\
			&&\quad \le \sum_{i=1}^{n}CC_{\vv}\left\{|X_i^{+}|^q\right\}+Cg(n)\left(\sum_{i=1}^{n}\ee X_i^2\right)^{q/2}.
		\end{eqnarray*}
		Hence when $\{X_n,n\ge 1\}$ is a sequence of WA random variables with dominating coefficients $\{g(n),n\ge 1\}$, by $C_r$ inequality, we see that 
		\begin{eqnarray*}
			&&\ee\left(\left(S_n^{+}\right)^q\right)\le C\ee\left(\left((S_n-\sum_{i=1}^{n}\ee X_i)^{+}\right)^q\right)+C\left(\sum_{i=1}^{n}|\ee(X_i)|\right)^q\\
			&&\quad \le \sum_{i=1}^{n}CC_{\vv}\left\{|(X_i-\ee X_i)^{+}|^q\right\}+Cg(n)\left(\sum_{i=1}^{n}\ee X_i^2\right)^{q/2}++C\left(\sum_{i=1}^{n}|\ee(X_i)|\right)^q\\
			&&\quad \le . \sum_{i=1}^{n}CC_{\vv}\left\{|X_i^{+}|^q\right\}+C\sum_{i=1}^{n}|\ee(X_i)|^q+Cg(n)\left(\sum_{i=1}^{n}\ee X_i^2\right)^{q/2}++C\left(\sum_{i=1}^{n}|\ee(X_i)|\right)^q\\
			&&\quad \le \sum_{i=1}^{n}CC_{\vv}\left\{|X_i^{+}|^q\right\}+Cg(n)\left(\sum_{i=1}^{n}\ee X_i^2\right)^{q/2}++C\left(\sum_{i=1}^{n}|\ee(X_i)|\right)^q,
		\end{eqnarray*}
		and 
		\begin{eqnarray*}
			&&\ee\left(\left((-S_n)^{+}\right)^q\right) \le \sum_{i=1}^{n}CC_{\vv}\left\{|X_i^{-}|^q\right\}+Cg(n)\left(\sum_{i=1}^{n}\ee X_i^2\right)^{q/2}++C\left(\sum_{i=1}^{n}|\ee(-X_i)|\right)^q.
		\end{eqnarray*}
		Therefore, combining the two equations above yields 
		\begin{eqnarray*}
			&&\ee\left(\left|\sum_{i=1}^{n}X_i\right|^q\right)\le  C_1(m,q)\sum_{i=1}^{n}C_{\vv}\left\{|X_i|^q\right\}\\
			&&\quad+C_2(m,q)g(n)\left(\sum_{i=1}^{n}\ee X_i^2\right)^{q/2}+C_3(m,q)\left(\sum_{i=1}^{n}\left[|\ee(X_i)|+|\ee(-X_i)|\right]\right)^q.
		\end{eqnarray*}
		When $\{X_n,n\ge 1\}$ is a sequence of $m$-WA random variables with dominating coefficients $\{g(n),n\ge 1\}$, by the equation above, the similar proof of Corollary 3 of Fang et al. \cite{fang2020asymptotic} ( or the adapted proof of Theorem 2.2 of Wu et al. \cite{wu2023capacity} ) and $C_r$ inequality, we finish the proof of this theorem.
	\end{proof}

	\section{Main results}
	Our main results, considered as an extension of Guan et al. \cite{guan2021complete} in some sense, are as follows.
	\begin{thm}\label{thm01}
		Suppose $l(x)$ is a function slowly varying at infinity, $p\ge 1$, $\alpha>\frac12$, $\alpha  p>1$. Suppose that $\{a_i,-\infty<i<\infty\}$ is an absolutely summable sequence of real numbers. Assume that $\{X_n=\sum_{i=-\infty}^{\infty}a_iY_{i+n},n\ge 1\}$ is a moving average process produced by a sequence $\{Y_i, -\infty<i<\infty\}$ of $m$-WA random variables with dominating coefficients $g(n)=O(n^{\delta})$ for some $\delta\ge 0$, and $\{Y_i,-\infty<i<\infty\}$ is stochastic dominated by $Y$ in sub-linear expectation space $(\Omega,\HH,\ee)$. If $\ee(Y_i)=\ee(-Y_i)=0$, $i=1,2,\cdots$, for $\frac12<\alpha\le 1$, $C_{\vv}\left\{|Y|^pl(|Y|^{1/\alpha})\right\}<\infty$ for $p>1$ and $C_{\vv}\left\{|Y|^{1+\lambda}\right\}<\infty$ for $p=1$ and some $\lambda>0$, then for any $\epsilon>0$,
		\begin{equation}\label{3.1}
			\sum_{n=1}^{\infty}n^{\alpha p-2-\alpha}l(n)C_{\vv}\left\{\left(\left|\sum_{j=1}^{n}X_j\right|-\epsilon n^{\alpha}\right)^{+}\right\}<\infty.
		\end{equation}
	\end{thm}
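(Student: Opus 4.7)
The plan is to rewrite the $C_{\vv}$-moment as a capacity-tail integral and then bound it via truncation together with the Bernstein-type estimate of Lemma~\ref{lem02} and the Rosenthal-type inequality of Lemma~\ref{lem03}. Setting $S_n:=\sum_{j=1}^{n}X_j$, the definition of $C_{\vv}$ gives
\begin{equation*}
C_{\vv}\bigl\{(|S_n|-\epsilon n^{\alpha})^{+}\bigr\}=\int_0^{\infty}\vv\bigl(|S_n|>\epsilon n^{\alpha}+s\bigr)\dif s,
\end{equation*}
and after interchanging summations, $S_n=\sum_{i\in\zz}a_i T_{n,i}$ with $T_{n,i}:=\sum_{j=1}^{n}Y_{i+j}$. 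The goal is to show that $\sum_{n\ge 1}n^{\alpha p-2-\alpha}l(n)\int_0^{\infty}\vv(|S_n|>\epsilon n^{\alpha}+s)\dif s$ is finite, which I accomplish by splitting the inner integral at $s=n^{\alpha}$ into parts $J_1(n)$ and $J_2(n)$ and handling each separately.

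For a threshold $u=u(n,s)$ of order $(n^{\alpha}+s)/\sum_i|a_i|$, pick a monotone $\phi_u\in\CC_{l,Lip}(\rr)$ agreeing with the identity on $|y|\le u/2$ and bounded by $u$ in modulus, and set $\widetilde Y_k^{(u)}:=\phi_u(Y_k)$; Lemma~\ref{lem01} then guarantees that $\{\widetilde Y_k^{(u)}\}$ is again $m$-WA with the same dominating coefficients. Write $S_n=\widetilde S_n^{(u)}+R_n^{(u)}$: by countable sub-additivity, stochastic domination, and Lemma~\ref{lem01+}, the residual contributes at most a capacity of order $Cn\,\vv(|Y|>cu)$ plus an $L^1$-type piece bounded by $Cn\,C_{\vv}\{|Y|I(|Y|>cu)\}$ after integration in $s$. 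For $1/2<\alpha\le 1$ the hypothesis $\ee(Y_i)=\ee(-Y_i)=0$ combined with sub-additivity yields $|\ee\widetilde Y_k^{(u)}|+|\ee(-\widetilde Y_k^{(u)})|\le 2\ee^*[|Y_k|I(|Y_k|>u/2)]$; so by stochastic domination and Lemma~\ref{lem01+} the total centering correction is $o(\epsilon n^{\alpha}+s)$ under the moment hypothesis. (For $\alpha>1$ no centering is needed since $n\ee^*|Y|=o(n^{\alpha})$.) Consequently, up to the negligible residual pieces, $\{|S_n|>\epsilon n^{\alpha}+s\}\subset\{|\widetilde S_n^{(u)}-\ee\widetilde S_n^{(u)}|>\epsilon n^{\alpha}/4+s/2\}$.

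On this cleaned event I apply two different inequalities. In $J_1(n)$ I invoke Lemma~\ref{lem02} with moment parameter $2$, using $\sum_k\ee(\widetilde Y_k^{(u)})^2\le Cn\,\ee^*|Y|^2$ when $p\ge 2$ and $\le Cn\,u^{2-p}C_{\vv}\{|Y|^p\}$ when $1\le p<2$, to produce an exponential tail. Integrating in $s$ and summing against $n^{\alpha p-2-\alpha}l(n)$ reduces the problem to series such as $\sum_n n^{\alpha p-1-\alpha}l(n)\,C_{\vv}\{|Y|^p l(|Y|^{1/\alpha})I(|Y|>cn^{\alpha})\}$, which is finite by $\alpha p>1$ and Karamata's theorem for slowly varying $l$. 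In $J_2(n)$ I use Markov's inequality with a large exponent $q>\max\{2,p\}$ satisfying $\alpha q>1+\alpha p$, combined with Lemma~\ref{lem03} and the absolute summability of $\{a_i\}$; the resulting terms give convergent sums once integrated in $s\ge n^{\alpha}$ under the moment hypothesis.

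The main obstacle is the simultaneous bookkeeping of the three error contributions---the residual $R_n^{(u)}$, the centering correction, and the Rosenthal main term from Lemma~\ref{lem03}---each of which must yield a convergent series after weighting by $n^{\alpha p-2-\alpha}l(n)$. This forces a delicate joint choice of the truncation level $u=u(n,s)$ and of the Rosenthal exponent $q$. Moreover, for the boundary case $p=1$ one must replace $C_{\vv}\{|Y|^p l(|Y|^{1/\alpha})\}<\infty$ by $C_{\vv}\{|Y|^{1+\lambda}\}<\infty$ to absorb an otherwise-divergent logarithmic factor, exactly as in Guan, Xiao and Zhao \cite{guan2021complete} and Zhang--Ding \cite{Zhang-Ding2017}, adapted to the sub-linear setting via Lemmas~\ref{lem01+}--\ref{lem03}.
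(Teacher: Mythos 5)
Your overall architecture --- the tail-integral representation of $C_{\vv}$, truncation of the $Y_k$ at a level comparable to the deviation threshold, and separate control of the residual, the centering correction, and a main term via a moment inequality --- is the same as the paper's, and your treatment of the residual, of the centering (using $\ee(Y_i)=\ee(-Y_i)=0$ and $\alpha p>1$), and of the far range $J_2$ (Markov's inequality with a large exponent plus Lemma~\ref{lem03} and $\sum_i|a_i|<\infty$) matches what the paper does over the entire range. The genuine gap is in $J_1$: you propose to apply the exponential inequality of Lemma~\ref{lem02} directly to $S_n=\sum_{i\in\zz}a_iT_{n,i}$, but Lemma~\ref{lem02} is stated for a single finite sum of \emph{WA} (not $m$-WA) random variables, and you give no way to pass from the doubly-infinite weighted sum to such a sum under the sole hypothesis $\sum_i|a_i|<\infty$. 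The natural reduction $\{|\sum_i a_iT_{n,i}|>x\}\subset\bigcup_i\{|T_{n,i}|>x/\sum_j|a_j|\}$ followed by countable sub-additivity yields an infinite series of terms that do not decay in $i$ (the exponential bound for each block is uniform in $i$ by stochastic domination), so it cannot be summed; the alternative allocation proportional to $|a_i|^{\theta}$ requires $\sum_i|a_i|^{\theta}<\infty$ for some $\theta<1$, which is assumed only in Theorem~\ref{thm02}, not here.

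The paper sidesteps this entirely: it bounds $\vv\left(\left|\sum_i a_i\sum_{j=i+1}^{i+n}Y_{xj}^{(1)}\right|>\epsilon x/2\right)$ by Markov's inequality with exponent $r$, then H\"older's inequality in $i$ (using $\sum_i|a_i|<\infty$) to reduce to $\sum_i|a_i|\,\ee\left|\sum_{j=i+1}^{i+n}Y_{xj}^{(1)}\right|^r$, and only then applies the Rosenthal-type Lemma~\ref{lem03} to each \emph{unweighted} block; the exponential inequality enters solely inside the proof of Lemma~\ref{lem03}, where the $m$-WA-to-WA reduction is also carried out. If you replace your $J_1$ step by this same Markov--H\"older--Rosenthal argument and treat the whole range $x\ge n^{\alpha}$ uniformly with truncation level $x$, your outline becomes the paper's proof; as written, the $J_1$ step does not go through.
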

	\begin{proof}
		For $2^{-\alpha}<\mu<1$, define $\tilde{g}_{\mu}(x)\in \CC_{l,Lip}(\rr)$ such that $I\{|x|\le \mu\}<\tilde{g}_{\mu}(x)< I\{|x|\le 1\}$.
		Define $g_j(x)\in \CC_{l,Lip}(\rr)$, $j\ge 1$ such that $g_j(x)$ is even function, and for $x$, $0\le g_j(x)\le 1$; $g_j(x/2^{j\alpha})=1$ while $2^{(j-1)\alpha}<|x|\le 2^{j\alpha}$, and $g_j(x/2^{j\alpha})=0$ while $|x|\le \mu 2^{(j-1)\alpha}$ or $|x|>(1+\mu)2^{j\alpha}$.
		We see that 
		\begin{equation}\label{001}
			g_j(|Y|/2^{j\alpha})\le I\left\{\mu 2^{(j-1)\alpha}<|Y|\le (1+\mu)2^{j\alpha}\right\}, |Y|^l\tilde{g}_{\mu}\left(\frac{|Y|}{2^{k\alpha}}\right)\le 1+\sum_{j=1}^{k}|Y|^lg_j\left(\frac{|Y|}{2^{j\alpha}}\right),
		\end{equation}
		\begin{equation}\label{002}
			1-\tilde{g}_{\mu}\left(\frac{|Y|}{2^{k\alpha}}\right)\le \sum_{j=k}^{\infty}g_j\left(\frac{|Y|}{2^{j\alpha}}\right).
		\end{equation}
		
		Let $f(n)=n^{\alpha p-2-\alpha}l(n)$, $Y_{xj}^{(1)}=-xI\{Y_j<-x\}+Y_jI\{|Y_j|\le x\}+xI\{Y_j>x\}$,
		$Y_{xj}^{(2)}=Y_j-Y_{xj}^{(1)}$ be the monotone trunctions of $\{Y_j,-\infty<j<\infty\}$ for $x>0$. Write $Y_x^{(1)}=-xI\{Y<-x\}+YI\{|Y|\le x\}+xI\{Y>x\}$,  $Y_{x}^{(2)}=Y-Y_x^{(1)}$. Then by Lemma \ref{lem01}, we see that $\{Y_{xj}^{(1)},-\infty<j<\infty\}$ and $\{Y_{xj}^{(2)},-\infty<j<\infty\}$ are two sequences of $m$-WA random variables. We observe that
		\begin{eqnarray}\label{3.2}
			\nonumber	&&\sum_{n=1}^{\infty}f(n)C_{\vv}\left\{\left(\left|\sum_{j=1}^{n}X_j\right|-\epsilon n^{\alpha}\right)^{+}\right\}\\
			\nonumber&&\quad\le \sum_{n=1}^{\infty}f(n)\int_{\epsilon n^{\alpha}}^{\infty}\vv\left\{\left|\sum_{j=1}^{n}X_j\right|>x\right\}\dif x\le C\sum_{n=1}^{\infty}f(n)\int_{ n^{\alpha}}^{\infty}\vv\left\{\left|\sum_{j=1}^{n}X_j\right|>\epsilon x\right\}\dif x\\
			\nonumber&&\quad\le C\sum_{n=1}^{\infty}f(n)\int_{ n^{\alpha}}^{\infty}\vv\left\{\left|\sum_{i=-\infty}^{\infty}a_i\sum_{j=i+1}^{i+n}Y_{xj}^{(2)}\right|>\epsilon x/2\right\}\dif x\\
			&&\quad\quad +C\sum_{n=1}^{\infty}f(n)\int_{ n^{\alpha}}^{\infty}\vv\left\{\left|\sum_{i=-\infty}^{\infty}a_i\sum_{j=i+1}^{i+n}Y_{xj}^{(1)}\right|>\epsilon x/2\right\}\dif x=:\Rmnum{1}_1+\Rmnum{1}_2.
		\end{eqnarray}
		Firstly, we establish $\Rmnum{1}_1<\infty$. Observe $|Y_{xj}^{(2)}|<|Y_j|\left(1-\tilde{g}_{\mu}\left(\frac{|Y_j|}{x}\right)\right)$. Then by Markov's inequality under sub-linear expectations, we see that 
		\begin{eqnarray*}
			\Rmnum{1}_1&\le& C\sum_{n=1}^{\infty}f(n)\int_{ n^{\alpha}}^{\infty}x^{-1}\ee^{*}\left|\sum_{i=-\infty}^{\infty}a_i\sum_{j=i+1}^{i+n}Y_{xj}^{(2)}\right|\dif x\\
			&\le&C\sum_{n=1}^{\infty}f(n)\int_{ n^{\alpha}}^{\infty}x^{-1}\sum_{i=-\infty}^{\infty}|a_i|\sum_{j=i+1}^{i+n}\ee^{*}\left|Y_{xj}^{(2)}\right|\dif x\\
			&\le&C\sum_{n=1}^{\infty}f(n)\int_{ n^{\alpha}}^{\infty}x^{-1}\ee|Y|\left(1-\tilde{g}_{\mu}\left(\frac{|Y|}{x}\right)\right)\dif x\\
			&\le&C\sum_{n=1}^{\infty}f(n)\sum_{m=n}^{\infty}\int_{ m^{\alpha}}^{(m+1)^{\alpha}}x^{-1}\ee|Y|\left(1-\tilde{g}_{\mu}\left(\frac{|Y|}{m^{\alpha}}\right)\right)\dif x\\
			&\le&C\sum_{m=1}^{\infty}m^{-1}\ee|Y|\left(1-\tilde{g}_{\mu}\left(\frac{|Y|}{m^{\alpha}}\right)\right)\sum_{n=1}^{m}f(n).
		\end{eqnarray*}
		If $p>1$, $\alpha p-1-\alpha>-1$, we conclude that
		\begin{eqnarray*}
			\Rmnum{1}_1&=&C\sum_{k=0}^{\infty}\sum_{m=2^{k}}^{2^{k+1}-1}m^{\alpha p-1-\alpha}l(m)\ee|Y|\left(1-\tilde{g}_{\mu}\left(\frac{|Y|}{m^{\alpha}}\right)\right)\\
			&\le&C\sum_{k=1}^{\infty}2^{\alpha pk-\alpha k}l(2^k)\ee|Y|\left(1-\tilde{g}_{\mu}\left(\frac{|Y|}{2^{k\alpha}}\right)\right)\\
			&\le&C\sum_{k=1}^{\infty}2^{\alpha pk-\alpha k}l(2^k)\ee^{*}\sum_{j=k}^{\infty}|Y|g_j\left(\frac{|Y|}{2^{j\alpha}}\right)\\
			&=&C\sum_{j=1}^{\infty}\ee^{*}|Y|g_j\left(\frac{|Y|}{2^{j\alpha}}\right)\sum_{k=1}^{j}2^{\alpha pk-\alpha k}l(2^k)\\
			&\le&C\sum_{j=1}^{\infty}2^{\alpha pj}l(2^j)\vv\left\{|Y|>\mu 2^{(j-1)\alpha}\right\}\\
			&\le&C\sum_{m=1}^{\infty}m^{\alpha p-1}l(m)\vv\left\{|Y|>\mu 2^{-1}m^{\alpha}\right\}\le C C_{\vv}\left\{|Y|^pl(|Y|^{1/\alpha})\right\}<\infty.
		\end{eqnarray*}
		If $p=1$, $C_{\vv}\left\{|Y|^{1+\lambda}\right\}<\infty$ yields $C_{\vv}\left\{|Y|^{1+\lambda'}l(|Y|^{1/\alpha})\right\}<\infty$ for any $0<\lambda'<\lambda$, then by Lemma 2.3 of Xu \cite{Xu2024}, we see that 
		\begin{eqnarray*}
			\Rmnum{1}_1&\le&C\sum_{m=1}^{\infty}m^{-1}\ee|Y|\left(1-\tilde{g}_{\mu}\left(\frac{|Y|}{m^{\alpha}}\right)\right)\sum_{n=1}^{m}n^{-1}l(n)\\
			&\le&C\sum_{m=1}^{\infty}m^{-1}\ee|Y|\left(1-\tilde{g}_{\mu}\left(\frac{|Y|}{m^{\alpha}}\right)\right)\sum_{n=1}^{m}n^{-1+\alpha \lambda'}l(n)\\
			&\le&C\sum_{m=1}^{\infty}m^{\alpha\lambda'-1}l(n)\ee|Y|\left(1-\tilde{g}_{\mu}\left(\frac{|Y|}{m^{\alpha}}\right)\right)
		\end{eqnarray*}
		\begin{eqnarray*}	&=&C\sum_{k=0}^{\infty}\sum_{m=2^{k}}^{2^{k+1}-1}m^{\alpha\lambda'-1}l(m)\ee|Y|\left(1-\tilde{g}_{\mu}\left(\frac{|Y|}{m^{\alpha}}\right)\right)\\
			&\le&C\sum_{k=1}^{\infty}2^{k(\alpha\lambda')}l(2^k)\ee|Y|\left(1-\tilde{g}_{\mu}\left(\frac{|Y|}{2^{k\alpha}}\right)\right)\\
			&\le&C\sum_{k=1}^{\infty}2^{k(\alpha\lambda')}l(2^k)\ee^{*}\left(|Y|\sum_{l=k}^{\infty}g_l\left(\frac{|Y|}{2^{l\alpha}}\right)\right)\\
			&\le&C\sum_{l=1}^{\infty}\ee^{*}\left(|Y|g_l\left(\frac{|Y|}{2^{l\alpha}}\right)\right)\sum_{k=1}^{l}2^{k(\alpha\lambda')}l(2^k)\\
			&\le&C\sum_{l=1}^{\infty}\ee\left(|Y|g_l\left(\frac{|Y|}{2^{l\alpha}}\right)\right)2^{l(\alpha\lambda')}l(2^l)\\
			&\le&C\sum_{l=1}^{\infty}\vv\left\{|Y|>\mu 2^{(l-1)\alpha}\right\}2^{l\alpha(\lambda'+1)}l(2^l)<\infty.
		\end{eqnarray*}
		Hence, we get 
		\begin{equation}\label{3.3}
			\Rmnum{1}_1<\infty.
		\end{equation}
		Next we establish $\Rmnum{1}_2<\infty$. From Markov's inequality under sub-linear expectations, H\"{o}lder's inequality and Lemma \ref{lem03}, follows that
		\begin{eqnarray}\label{3.4}
			\nonumber	\Rmnum{1}_2&\le&C\sum_{n=1}^{\infty}f(n)\int_{ n^{\alpha}}^{\infty}x^{-r}\ee^{*}\left|\sum_{i=-\infty}^{\infty}a_i\sum_{j=i+1}^{i+n}Y_{xj}^{(1)}\right|^r\dif x\\
			\nonumber&\le&C\sum_{n=1}^{\infty}f(n)\int_{ n^{\alpha}}^{\infty}x^{-r}\ee^{*}\left|\sum_{i=-\infty}^{\infty}|a_i|^{\frac{r-1}{r}}\left(|a_i|^{\frac{1}{r}}\left|\sum_{j=i+1}^{i+n}Y_{xj}^{(1)}\right|\right)\right|^r\dif x\\
			\nonumber&\le&C\sum_{n=1}^{\infty}f(n)\int_{ n^{\alpha}}^{\infty}x^{-r}\left(\sum_{i=-\infty}^{\infty}|a_i|\right)^{r-1}\left(\sum_{i=-\infty}^{\infty}|a_i|\ee^{*}\left|\sum_{j=i+1}^{i+n}Y_{xj}^{(1)}\right|^r\right)\dif x\\
			\nonumber&\le&C\sum_{n=1}^{\infty}f(n)\int_{ n^{\alpha}}^{\infty}x^{-r}\left(\sum_{i=-\infty}^{\infty}|a_i|\ee\left|\sum_{j=i+1}^{i+n}Y_{xj}^{(1)}\right|^r\right)\dif x\\
			\nonumber&\le&C\sum_{n=1}^{\infty}f(n)\int_{ n^{\alpha}}^{\infty}x^{-r}\sum_{i=-\infty}^{\infty}|a_i|\sum_{j=i+1}^{i+n}C_{\vv}\left\{|Y_{xj}^{(1)}|^r\right\}\dif x\\
			\nonumber&&+C\sum_{n=1}^{\infty}f(n)g(n)\int_{ n^{\alpha}}^{\infty}x^{-r}\sum_{i=-\infty}^{\infty}|a_i|\left(\sum_{j=i+1}^{i+n}\ee\left|Y_{xj}^{(1)}\right|^2\right)^{r/2}\dif x\\
			\nonumber&&+C\sum_{n=1}^{\infty}f(n)\int_{ n^{\alpha}}^{\infty}x^{-r}\sum_{i=-\infty}^{\infty}|a_i|\left(\sum_{j=i+1}^{i+n}\left|\ee(Y_{xj}^{(1)})\right|+\left|\ee(-Y_{xj}^{(1)})\right|\right)^{r}\dif x\\
			&=&:\Rmnum{1}_{21}+\Rmnum{1}_{22}+\Rmnum{1}_{23},
		\end{eqnarray}
		where $r\ge2$ is given later.
		
		For $\Rmnum{1}_21$, if $p>1$, taking $r>\max\{2,p\}$, then by  $C_r$ inequality, similar proof of (2.8) of Zhang \cite{Zhang2021}, Lemma 2.3 of Xu \cite{Xu2024}, we see that
		\begin{eqnarray}\label{3.5}
			\nonumber	\Rmnum{1}_{21}&\le&C\sum_{n=1}^{\infty}f(n)\int_{ n^{\alpha}}^{\infty}x^{-r}\sum_{i=-\infty}^{\infty}|a_i|\sum_{j=i+1}^{i+n}C_{\vv}\left\{|Y_{x}^{(1)}|^r\right\}\dif x\\
			\nonumber&\le &C\sum_{n=1}^{\infty}f(n)n\sum_{m=n}^{\infty}\int_{ m^{\alpha}}^{(m+1)^{\alpha}}x^{-r}C_{\vv}\left\{|Y_{x}^{(1)}|^r\right\}\dif x\\
			\nonumber&\le &C\sum_{n=1}^{\infty}f(n)n\sum_{m=n}^{\infty}m^{\alpha(1-r)-1}C_{\vv}\left\{|Y_{(m+1)^{\alpha}}^{(1)}|^r\right\}\\
			\nonumber&\le &C\sum_{m=1}^{\infty}m^{\alpha(1-r)-1}\int_{0}^{(m+1)^{\alpha}}\vv\left(|Y|>x\right)x^{r-1}\dif x\sum_{n=1}^{m}f(n)n\\
			\nonumber&\le &C\sum_{m=1}^{\infty}m^{\alpha(1-r)-1}\int_{0}^{(m+1)^{\alpha}}\vv\left(|Y|>x\right)x^{r-1}\dif x m^{\alpha p-\alpha}l(m)\\
			\nonumber&\le &C\sum_{m=1}^{\infty}m^{\alpha(p-r)-1}l(m)\sum_{k=1}^{m+1}\int_{(k-1)^{\alpha}}^{k^{\alpha}}\vv\left(|Y|>x\right)x^{r-1}\dif x \\
			\nonumber&\le &C\sum_{k=1}^{\infty}\int_{(k-1)^{\alpha}}^{k^{\alpha}}\vv\left(|Y|>x\right)x^{r-1}\dif x\sum_{m=1\bigvee(k-1)}^{\infty}m^{\alpha(p-r)-1}l(m) \\
			\nonumber&\le &C\sum_{k=2}^{\infty}\vv\left(|Y|>(k-1)^{\alpha}\right)k^{r\alpha-1}k^{\alpha(p-r)}l(k)+C \sum_{m=1}^{\infty}m^{\alpha(p-r)-1}l(m)\\
			&\le &C\sum_{k=2}^{\infty}\vv\left(|Y|>(1/2)^{\alpha}k^{\alpha}\right)k^{\alpha p-1}l(k)+C<\infty.
		\end{eqnarray}
		For $\Rmnum{1}_{21}$, if $p=1$, taking $r>\max\{1+\lambda',2\}$, where $0<\lambda'<\lambda$, then by the similar discussion as above, we see that
		\begin{eqnarray}\label{3.6}
			\Rmnum{1}_{21}&\le&C\sum_{m=1}^{\infty}m^{\alpha(1-r)-1}\int_{0}^{(m+1)^{\alpha}}\vv\left(|Y|>x\right)x^{r-1}\dif x\sum_{n=1}^{m}f(n)n\\
			\nonumber&\le &C\sum_{m=1}^{\infty}m^{\alpha(1-r+\lambda')-1}\int_{0}^{(m+1)^{\alpha}}\vv\left(|Y|>x\right)x^{r-1}\dif x l(m)\\
			\nonumber&\le &C\sum_{m=1}^{\infty}m^{\alpha(1-r+\lambda')-1}l(m)\sum_{k=1}^{m+1}\int_{(k-1)^{\alpha}}^{k^{\alpha}}\vv\left(|Y|>x\right)x^{r-1}\dif x \\
			\nonumber&\le &C\sum_{k=1}^{\infty}\int_{(k-1)^{\alpha}}^{k^{\alpha}}\vv\left(|Y|>x\right)x^{r-1}\dif x\sum_{m=1\bigvee(k-1)}^{\infty}m^{\alpha(1-r+\lambda')-1}l(m) \\
			\nonumber&\le &C\sum_{k=2}^{\infty}\vv\left(|Y|>(k-1)^{\alpha}\right)k^{r\alpha-1}k^{\alpha(1-r+\lambda')}l(k)+C \sum_{m=1}^{\infty}m^{\alpha(1-r+\lambda')-1}l(m)\\
			&\le &C\sum_{k=2}^{\infty}\vv\left(|Y|>(1/2)^{\alpha}k^{\alpha}\right)k^{\alpha (1+\lambda')-1}l(k)+C<\infty.
		\end{eqnarray}
		For $\Rmnum{1}_{22}$, if $1\le p<2$, observing that $g(n)=O(n^{\delta})$, taking $r>2$ fulfilling that $\alpha p+r/2-\alpha pr/2-1+\delta=(\alpha p-1)(1-r/2)+\delta<0$, then by $C_r$ inequality, we conclude that 
		\begin{eqnarray}\label{3.7}
			\nonumber\Rmnum{1}_{22}&\le&C\sum_{n=1}^{\infty}n^{r/2}f(n)g(n)\int_{ n^{\alpha}}^{\infty}x^{-r}\left(\ee\left|Y_{x}^{(1)}\right|^2\right)^{r/2}\dif x\\
			\nonumber&\le&C\sum_{n=1}^{\infty}n^{r/2}f(n)g(n)\sum_{m=n}^{\infty}\int_{ m^{\alpha}}^{(m+1)^{\alpha}}\\
			\nonumber&&\times \left[x^{-r}\left(\ee\left(|Y|^2\tilde{g}_{\mu}\left(\frac{\mu|Y|}{x}\right)\right)\right)^{r/2}+\left(\ee\left(1-\tilde{g}_{\mu}\left(\frac{|Y|}{x}\right)\right)\right)^{r/2}\right]\dif x\\
			\nonumber&\le&C\sum_{m=1}^{\infty}\left[m^{\alpha(1-r)-1}\ee\left(|Y|^2\tilde{g}_{\mu}\left(\frac{\mu|Y|}{(m+1)^{\alpha}}\right)\right)^{r/2}+m^{\alpha-1}\left(\ee\left(1-\tilde{g}_{\mu}\left(\frac{|Y|}{m^{\alpha}}\right)\right)\right)^{r/2}\right]\\
			\nonumber&&\times\sum_{n=m}^{\infty}n^{r/2}f(n)g(n)\\
			\nonumber&\le&C\sum_{m=1}^{\infty}m^{\alpha(p-r)+r/2+\delta-2}l(m)\left[\ee\left(|Y|^p|Y|^{2-p}\tilde{g}_{\mu}\left(\frac{\mu|Y|}{(m+1)^{\alpha}}\right)\right)\right]^{r/2}\\
			\nonumber&&+C\sum_{m=1}^{\infty}m^{\alpha p+r/2+\delta-2}l(m)\left(\ee\left(1-\tilde{g}_{\mu}\left(\frac{|Y|}{m^{\alpha}}\right)\right)\right)^{r/2}\\
			&\le&C\sum_{m=1}^{\infty}m^{\alpha p+r/2-\alpha pr/2+\delta-2}l(m)\left(\ee|Y|^p\right)^{r/2}<\infty.
		\end{eqnarray}
		For $\Rmnum{1}_{22}$, if $p\ge 2$, observing that $g(n)=O(n^{\delta})$, taking $r>(\alpha p-1)/(\alpha-1/2)\ge p$ satisfying $\alpha(p-r)+r/2+\delta-1<0$, then by $C_r$ inequality, and similar proof of (\ref{3.7}), we have
		\begin{eqnarray}\label{3.8}
			\nonumber	\Rmnum{1}_{22}&\le&C\sum_{m=1}^{\infty}\left[m^{\alpha(1-r)-1}\ee\left(|Y|^2\tilde{g}_{\mu}\left(\frac{\mu|Y|}{(m+1)^{\alpha}}\right)\right)^{r/2}+m^{\alpha-1}\left(\ee\left(1-\tilde{g}_{\mu}\left(\frac{|Y|}{m^{\alpha}}\right)\right)\right)^{r/2}\right]\\
			\nonumber&&\times\sum_{n=m}^{\infty}n^{r/2}f(n)g(n)\\
			\nonumber&\le&C\sum_{m=1}^{\infty}m^{\alpha(p-r)+r/2+\delta-2}l(m)\left(\ee|Y|^2\tilde{g}_{\mu}\left(\frac{\mu|Y|}{(m+1)^{\alpha}}\right)\right)^{r/2}\\
			\nonumber&&+C\sum_{m=1}^{\infty}m^{\alpha p+r/2+\delta-2}l(m)\left(\ee\left(1-\tilde{g}_{\mu}\left(\frac{|Y|}{m^{\alpha}}\right)\right)\right)^{r/2}\\
			&\le&C\sum_{m=1}^{\infty}m^{\alpha(p-r)+r/2+\delta-2}l(m)\left(\ee|Y|^2\right)^{r/2}<\infty.
		\end{eqnarray}
		For $\Rmnum{1}_{23}$, we take $r>2$. By $\ee(Y_i)=\ee(-Y_i)=0$, Proposition 1.3.7 of Peng \cite{Peng2019} and Lemma 4.5 (\rmnum{3}) of Zhang \cite{Zhang2016a}, we get 
		\begin{eqnarray}\label{3.9}
			\nonumber\Rmnum{1}_{23}&\le&C\sum_{n=1}^{\infty}f(n)\sum_{m=n}^{\infty}\int_{ m^{\alpha}}^{(m+1)^{\alpha}}x^{-r}\left(\sup_{i}\sum_{j=i+1}^{i+n}\ee\left|Y_{xj}^{(1)}-Y_j\right|\right)^{r}\dif x\\
			\nonumber&\le&C\sum_{n=1}^{\infty}f(n)\sum_{m=n}^{\infty}\int_{ m^{\alpha}}^{(m+1)^{\alpha}}x^{-r}n^r\left(\ee\left|Y\right|\left(1-\tilde{g}_{\mu}\left(\frac{|Y|}{x}\right)\right)\right)^{r}\dif x
		\end{eqnarray}
		\begin{eqnarray}\label{3.9}
			\nonumber&\le&C\sum_{n=1}^{\infty}f(n)n^r\sum_{m=n}^{\infty}m^{\alpha(1-r)-1}\left(\ee\left|Y\right|\left(1-\tilde{g}_{\mu}\left(\frac{|Y|}{m^{\alpha}}\right)\right)\right)^{r}\\
			\nonumber&\le&C\sum_{m=1}^{\infty}m^{\alpha(1-r)-1}\frac{\ee\left(|Y|^pl(|Y|^{1/\alpha})\right)^r}{m^{\alpha(p-1)r}l^r(m)}\sum_{n=1}^{m}f(n)n^r\\
			&\le&C\sum_{m=1}^{\infty}m^{-(\alpha p-1)(r-1)-1}/l^r(m)\left(C_{\vv}\left\{|Y|^pl(|Y|^{1/\alpha})\right\}\right)^r<\infty.
		\end{eqnarray}
		Hence, (\ref{3.1}) is established by (\ref{3.2})-(\ref{3.9}).
	\end{proof}
	We next investigate the case $\alpha p=1$.
	\begin{thm}\label{thm02}
		Assume that $l$ is a function slowly varying at infinity, $1\le p<2$. Suppose that $\sum_{i=-\infty}^{\infty}|a_i|^{\theta}<\infty$, where $\theta\in (0,1)$ if $p=1$ and $\theta=1$ if $1<p<2$. Assume that $\{X_n=\sum_{i=-\infty}^{\infty}a_iY_{i+n},n\ge 1\}$ is a moving average process produced by a sequence $\{Y_i,-\infty<i<\infty\}$ of $m$-WA random variables with dominating $g(n)=O(n^{\delta})$ for some $0\le \delta<(2-p)/p$, stochastically dominated by a random variable $Y$. While $p=1$, assume that $0<\delta<1$. If $\ee(Y_i)=\ee(-Y_i)=0$ and $C_{\vv}\left\{|Y|^{p(1+\delta)}l(|Y|^p)\right\}<\infty$, then for any $\varepsilon>0$,
		\begin{equation}\label{3.10}
			\sum_{n=1}^{\infty}n^{-1-1/p}l(n)C_{\vv}\left\{\left(\left|\sum_{j=1}^{k}X_j\right|-\varepsilon n^{1/p}\right)^{+}\right\}<\infty.
		\end{equation}
	\end{thm}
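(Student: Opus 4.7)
The plan is to mirror the argument of Theorem \ref{thm01} with $\alpha=1/p$, noting that $\alpha p=1$ sits at the boundary of that range, so the strengthened moment hypothesis $C_{\vv}\{|Y|^{p(1+\delta)}l(|Y|^p)\}<\infty$ together with the strict condition $\delta<(2-p)/p$ must be exploited to close the estimates. I would first rewrite the moment functional as
\[
C_{\vv}\bigl\{\bigl(\bigl|\sum_{j=1}^n X_j\bigr|-\varepsilon n^{1/p}\bigr)^{+}\bigr\}\le \int_{\varepsilon n^{1/p}}^{\infty}\vv\bigl\{\bigl|\sum_{j=1}^n X_j\bigr|>x\bigr\}\,\dif x,
\]
and then use the symmetric truncation $Y_{xj}^{(1)}=-xI\{Y_j<-x\}+Y_jI\{|Y_j|\le x\}+xI\{Y_j>x\}$ together with $Y_{xj}^{(2)}=Y_j-Y_{xj}^{(1)}$. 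By Lemma \ref{lem01} both families remain $m$-WA, so the target series splits into a tail piece $\Rmnum{1}_1$ coming from $\{Y_{xj}^{(2)}\}$ and a truncated piece $\Rmnum{1}_2$ coming from $\{Y_{xj}^{(1)}\}$, in complete analogy with (\ref{3.2}).

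For $\Rmnum{1}_1$ I would apply Markov's inequality via Lemma \ref{lem01+} and the stochastic domination of $\{Y_i\}$ by $Y$, reducing matters to a discrete sum of the form $\sum_m m^{-1}\ee|Y|(1-\tilde g_\mu(|Y|/m^{1/p}))\sum_{n=1}^m n^{-1/p}l(n)$ after pulling out $\sum_i|a_i|<\infty$ (which is available even when $\theta<1$, since $\sum|a_i|^\theta<\infty$ forces $|a_i|\le|a_i|^\theta$ for all but finitely many $i$). Dyadic blocking together with the decomposition (\ref{002}) then reduces this to a series of type $\sum_j 2^j l(2^j)\vv\{|Y|>c\cdot 2^{j/p}\}$, which is controlled by $C_{\vv}\{|Y|^p l(|Y|^p)\}$ and hence by the hypothesis moment.

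For $\Rmnum{1}_2$ I would apply the Rosenthal-type inequality of Lemma \ref{lem03} with $r>2$ chosen at the end, handling the coefficients $a_i$ via H\"older when $\theta=1$, and for $p=1$, $\theta<1$, via the weighted Jensen bound $|\sum_i a_i W_i|^r\le(\sum_i|a_i|^\theta)^{r-1}\sum_i|a_i|^{r-\theta(r-1)}|W_i|^r$, whose coefficient sum converges because $|a_i|^{r-\theta(r-1)}\le|a_i|^\theta$ once $|a_i|\le 1$. This produces three summands to control: a large-moment piece $\sum_j C_{\vv}\{|Y_{xj}^{(1)}|^r\}$ handled as in (\ref{3.5})--(\ref{3.6}) using the $|Y|^{p(1+\delta)}l(|Y|^p)$ moment; a variance piece amplified by $g(n)=O(n^\delta)$, which I bound via $\ee|Y_x^{(1)}|^2\le Cx^{2-p(1+\delta)}\,C_{\vv}\{|Y|^{p(1+\delta)}\}$ (this uses $p(1+\delta)<2$); and a centered-mean piece, which because $\ee(Y_i)=\ee(-Y_i)=0$ is dominated, via Proposition 1.3.7 of Peng \cite{Peng2019} and Lemma 4.5(\rmnum{3}) of Zhang \cite{Zhang2016a}, by the quantity $n^r(\ee|Y|(1-\tilde g_\mu(|Y|/x)))^r$ already estimated in (\ref{3.9}).

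The main obstacle is the variance piece: on the boundary $\alpha p=1$ the naive exponent balance produces the divergent series $\sum_n n^{-1}l(n)$. The bound $\ee|Y_x^{(1)}|^2\le Cx^{2-p(1+\delta)}$ absorbs enough powers of $x$ in the integral to turn this contribution into a series of the form $\sum_n n^{-1-r\delta/2}l(n)$, $r>2$, which converges precisely because $\delta>0$ (enforced in the $p=1$ case) and $r$ may be chosen arbitrarily large. Combining the three controlled pieces of $\Rmnum{1}_2$ with $\Rmnum{1}_1$ then yields (\ref{3.10}); all remaining steps are routine bookkeeping parallel to the proof of Theorem \ref{thm01}.
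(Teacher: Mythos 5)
Your decomposition and your treatment of $\Rmnum{1}_1$ and of the first and third Rosenthal terms are workable, but there is a genuine gap in the variance piece of $\Rmnum{1}_2$, and it occurs in a case the theorem explicitly allows. You take $r>2$ in Lemma \ref{lem03} and bound $\ee|Y_x^{(1)}|^2\le Cx^{2-p(1+\delta)}C_{\vv}\{|Y|^{p(1+\delta)}\}$, arriving at a series of order $\sum_n n^{-1-\delta(r/2-1)}l(n)$ whose convergence you attribute to $\delta>0$. But $\delta>0$ is imposed only when $p=1$; for $1<p<2$ the hypothesis is $0\le\delta<(2-p)/p$, so $\delta=0$ is admissible, and then your variance series is $\sum_n n^{-1}l(n)$, which diverges already for $l\equiv 1$. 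The loss is structural rather than cosmetic: once you raise the truncated second moment to the power $r/2>1$, you can no longer keep it in the form $\int_0^{x^2}\vv\{|Y|^2>y\}\,\dif y$ and interchange it with the outer sums, so at the boundary $\alpha p=1$ you forfeit exactly the slowly varying margin that the hypothesis $C_{\vv}\{|Y|^{p(1+\delta)}l(|Y|^p)\}<\infty$ is designed to supply; no pointwise bound of the form $\ee|Y_x^{(1)}|^2\le C x^{2-p}\epsilon(x)$ with an unquantified $\epsilon(x)\to 0$ can rescue $\sum_n n^{-1}\epsilon(n^{1/p})^{r/2}$.

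The paper's proof of this theorem instead applies Lemma \ref{lem03} with $q=2$, so that the bound for $J_2$ is \emph{linear} in $\sum_j C_{\vv}\{|Y_{xj}^{(1)}|^2\}$ up to the factor $1+g(n)$. Keeping $C_{\vv}\{|Y_x^{(1)}|^2\}\le\int_0^{x^2}\vv\{|Y|^2>y\}\,\dif y$ intact and interchanging summation twice --- first over $n\le m$, using $\sum_{n\le m}n^{-1/p+\delta}l(n)\le Cm^{1-1/p+\delta}l(m)$, then over $m\ge\ell-1$, using $-2/p+\delta<-1$ --- produces the weight $\ell^{1+\delta-2/p}l(\ell)$ against $\vv\{|Y|^2>y\}$, which after the substitution $u=y^{p/2}$ integrates to exactly $C\,C_{\vv}\{|Y|^{p(1+\delta)}l(|Y|^p)\}$ for every admissible $\delta\ge 0$. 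As written, your argument proves the theorem only under the additional restriction $\delta>0$; to cover $1<p<2$ with $\delta=0$ you must switch to the $q=2$ form of the Rosenthal inequality (or otherwise avoid raising the second moment to a power above $1$). Your other deviations --- Markov's inequality with exponent $1$ rather than $\theta$ in $\Rmnum{1}_1$, and the weighted H\"older bound for $\sum_i a_iW_i$ when $\theta<1$ --- are correct and harmless.
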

	\begin{proof}
		Let $h(x)=n^{-1-1/p}l(n)$. As in the proof of (\ref{3.2}), we have 
		\begin{eqnarray}\label{3.11}
			\nonumber&&	\sum_{n=1}^{\infty}h(n)C_{\vv}\left\{\left(\left|\sum_{j=1}^{k}X_j\right|-\varepsilon n^{1/p}\right)^{+}\right\}\\
			\nonumber&\le&C\sum_{n=1}^{\infty}h(n)\int_{n^{1/p}}^{\infty}\vv\left\{\left|\sum_{i=-\infty}^{\infty}a_i\sum_{j=i+1}^{i+n}Y_{xj}^{(2)}\right|> \varepsilon x/2\right\}\dif x\\
			\nonumber&&+C\sum_{n=1}^{\infty}h(n)\int_{n^{1/p}}^{\infty}\vv\left\{\left|\sum_{i=-\infty}^{\infty}a_i\sum_{j=i+1}^{i+n}Y_{xj}^{(1)}\right|> \varepsilon x/2\right\}\dif x\\
			&=&:J_1+J_2.
		\end{eqnarray}
		For $J_1$, take $\varepsilon'=\delta$. By Markov's inequality under sub-linear expectations, $C_r$ inequality, and Lemma 2.3 of Xu \cite{Xu2024}, we see that
		\begin{eqnarray}\label{3.12}
			\nonumber J_1&\le&C\sum_{n=1}^{\infty}h(n)\int_{n^{1/p}}^{\infty}x^{-\theta}\ee^{*}\left|\sum_{i=-\infty}^{\infty}a_i\sum_{j=i+1}^{i+n}Y_{xj}^{(2)}\right|^2\dif x\\
			\nonumber &\le&C\sum_{n=1}^{\infty}nh(n)\int_{n^{1/p}}^{\infty}x^{-\theta}\ee|Y_{x}^{(2)}|^{\theta}\dif x\\
			\nonumber &\le&C\sum_{n=1}^{\infty}nh(n)\sum_{m=n}^{\infty}\int_{m^{1/p}}^{(m+1)^{1/p}}x^{-\theta}\ee|Y|^{\theta}\left(1-\tilde{g}_{\mu}\left(\frac{|Y|}{x}\right)\right)\dif x
		\end{eqnarray}
		\begin{eqnarray}\label{3.12}
			\nonumber&\le&C\sum_{n=1}^{\infty}nh(n)\sum_{m=n}^{\infty}m^{(1-\theta)/p-1}\ee|Y|^{\theta}\left(1-\tilde{g}_{\mu}\left(\frac{|Y|}{m^{1/p}}\right)\right)\\
			\nonumber&=&C\sum_{m=1}^{\infty}m^{(1-\theta)/p-1}\ee|Y|^{\theta}\left(1-\tilde{g}_{\mu}\left(\frac{|Y|}{m^{1/p}}\right)\right)\sum_{n=1}^{m}nh(n)\\
			\nonumber&\le&\begin{cases}
				C\sum_{m=1}^{\infty}m^{-1/p}l(m)\ee|Y|\left(1-\tilde{g}_{\mu}\left(\frac{|Y|}{m^{1/p}}\right)\right), & \text{  $1<p<2$}\\
				C\sum_{m=1}^{\infty}m^{(1-\theta)/p-1}\ee|Y|^{\theta}\left(1-\tilde{g}_{\mu}\left(\frac{|Y|}{m^{1/p}}\right)\right)\sum_{n=1}^{m}n^{\varepsilon'-1}l(n), & \text{  $p=1$}
			\end{cases}\\
			\nonumber&\le&	\begin{cases}
				C\sum_{k=0}^{\infty}\sum_{m=2^k}^{2^{k+1}-1}m^{-1/p}l(m)\ee|Y|\left(1-\tilde{g}_{\mu}\left(\frac{|Y|}{m^{1/p}}\right)\right), & \text{  $1<p<2$}\\
				C\sum_{k=0}^{\infty}\sum_{m=2^k}^{2^{k+1}-1}m^{-\theta+\varepsilon'}l(m)\ee|Y|^{\theta}\left(1-\tilde{g}_{\mu}\left(\frac{|Y|}{m}\right)\right), & \text{  $p=1$}
			\end{cases}\\
			\nonumber&\le&
			\begin{cases}
				C\sum_{k=1}^{\infty}2^{k(-1/p+1)}l(2^k)\ee^{*}\left(|Y|\sum_{j=k}^{\infty}g_j\left(\frac{|Y|}{2^{j/p}}\right)\right), & \text{  $1<p<2$}\\
				C\sum_{k=1}^{\infty}2^{k(-\theta+\varepsilon'+1)}l(2^k)\ee^{*}\left(|Y|^{\theta}\sum_{j=k}^{\infty}g_j\left(\frac{|Y|}{2^{j}}\right)\right), & \text{  $p=1$}
			\end{cases}\\
			\nonumber&\le&
			\begin{cases}
				C\sum_{j=1}^{\infty}\ee^{*}\left(|Y|g_j\left(\frac{|Y|}{2^{j/p}}\right)\right)\sum_{k=1}^{j}2^{k(-1/p+1)}l(2^k), & \text{  $1<p<2$}\\
				C\sum_{j=1}^{\infty}\ee^{*}\left(|Y|^{\theta}g_j\left(\frac{|Y|}{2^{j}}\right)\right)\sum_{k=1}^{j}2^{k(-\theta+\varepsilon'+1)}l(2^k), & \text{  $p=1$}
			\end{cases}\\
			&\le&
			\begin{cases}
				C\sum_{j=1}^{\infty}\vv\left\{|Y|>\mu 2^{(j-1)/p}\right\}2^{j}l(2^j)<\infty, & \text{  $1<p<2$}\\
				C\sum_{j=1}^{\infty}\vv\left\{|Y|>\mu 2^{(j-1)}\right\}2^{j(\varepsilon'+1)}l(2^k)<\infty, & \text{  $p=1$,}
			\end{cases}
		\end{eqnarray}
		where $\tilde{g}_{\mu}(\cdot)$, $g_j(\cdot)$ here are defined as those of (\ref{001}) and (\ref{002}) with only $1/p$ here in place of $\alpha$ there.
		
		For $J_2$, as in the proof of $\Rmnum{1}_2$, observing that $g(n)=O(n^{\delta})$, for some $0\le\delta<(2-p)/p $, taking $q=2$ by Lemma \ref{lem03} and similar proof of (2.8) of Zhang \cite{Zhang2021}, we get
		\begin{eqnarray}\label{3.13}
			\nonumber J_2&\le&C\sum_{n=1}^{\infty}h(n)\int_{n^{1/p}}^{\infty}x^{-2}\ee^{*}\left|\sum_{i=-\infty}^{\infty}a_i\sum_{j=i+1}^{i+n}Y_{xj}^{(1)}\right|^2\dif x\\
			\nonumber&\le&C\sum_{n=1}^{\infty}nh(n)(1+g(n))\sum_{m=n}^{\infty}\int_{m^{1/p}}^{(m+1)^{1/p}}x^{-2}C_{\vv}\left\{|Y_{x}^{(1)}|^2\right\}\dif x\\
			\nonumber	&&+C\sum_{n=1}^{\infty}h(n)\sum_{m=n}^{\infty}\int_{m^{1/p}}^{(m+1)^{1/p}}x^{-2}\left[\sum_{i=1}^{n}|\ee(Y_{xj}^{(1)})|+|\ee(-Y_{xj}^{(1)})|\right]^2\dif x\\
			\nonumber&\le&C\sum_{n=1}^{\infty}nh(n)(1+g(n))\sum_{m=n}^{\infty}m^{-1/p-1}\int_{0}^{(m+1)^{2/p}}\vv\left\{|Y|^2>y\right\}\dif y\\
			\nonumber&&+C\sum_{n=1}^{\infty}h(n)\sum_{m=n}^{\infty}m^{-1/p-1}\left[\sum_{i=1}^{n}\ee(|Y_{m^{1/p}j}^{(2)}|)\right]^2\\
			\nonumber&\le&C\sum_{m=1}^{\infty}m^{-1/p-1}\int_{0}^{(m+1)^{2/p}}\vv\left\{|Y|^2>y\right\}\dif y\sum_{n=1}^{m}n^{-1/p}l(n)(1+g(n))\\
			\nonumber&&+C\sum_{m=1}^{\infty}m^{-1/p-1}\left[\ee(|Y_{m^{1/p}}^{(2)}|)\right]^2\sum_{n=1}^{m}n^{1-1/p}l(n)
		\end{eqnarray}
		\begin{eqnarray}\label{3.13}	
			\nonumber&\le&C\sum_{m=1}^{\infty}m^{-2/p+\delta}l(m)\int_{0}^{(m+1)^{2/p}}\vv\left\{|Y|^2>y\right\}\dif y\\
			\nonumber&&+C\sum_{m=1}^{\infty}m^{-2/p+1}l(m)\left[\ee(|Y_{m^{1/p}}^{(2)}|)\right]^2\\	\nonumber&\le&C\sum_{m=1}^{\infty}m^{-2/p+\delta}l(m)\sum_{\ell=1}^{m+1}\int_{(\ell-1)^{2/p}}^{(\ell)^{2/p}}\vv\left\{|Y|^2>y\right\}\dif y\\
			\nonumber&&+C\sum_{m=1}^{\infty}m^{-2/p+1}l(m)\left[\ee|Y|\left(1-\tilde{g}_{\mu}\left(\frac{|Y|}{m^{1/p}}\right)\right)\right]^2\\
			\nonumber&\le&C\sum_{\ell=1}^{\infty}\int_{(\ell-1)^{2/p}}^{(\ell)^{2/p}}\vv\left\{|Y|^2>y\right\}\dif y\sum_{m=1\bigvee (\ell-1)}^{\infty}m^{-2/p+\delta}l(m)\\
			\nonumber&&+C\sum_{m=1}^{\infty}m^{-2/p+1}l(m)\frac{\left(\ee\left(|Y|^{p(1+\delta)}l(|Y|^p)\right)\right)^2}{\left(m^{(p(1+\delta)-1)/p}l(m)\right)^2}\\
			\nonumber&\le&C\sum_{\ell=2}^{\infty}\int_{(\ell-1)^{2/p}}^{(\ell)^{2/p}}\vv\left\{|Y|^2>y\right\}\ell^{1+\delta-2/p}l(\ell)\dif y+C\\
			\nonumber&&+C\sum_{m=1}^{\infty}m^{-2\delta-1}/l(m)\left(C_{\vv}\left\{|Y|^{p(1+\delta)}l(|Y|^p)\right\}\right)^2\\
			\nonumber&\le&C\sum_{\ell=2}^{\infty}\int_{(\ell-1)^{2/p}}^{\ell^{2/p}}\vv\left\{|Y|^2>y\right\}y^{(1+\delta-2/p)p/2}l(y^{p/2})\dif y+C\\
			&\le&C\int_{1}^{\infty}\vv\left\{|Y|^p>y\right\}\dif (l(y)y^{1+\delta})+C\le CC_{\vv}\left\{|Y|^{p(1+\delta)}l(|Y|^p)\right\}<\infty.
		\end{eqnarray}
		Therefore, combining (\ref{3.11})-(\ref{3.13}) results in (\ref{3.10}). This completes the proof.
	\end{proof}
	By Theorems \ref{thm01}, \ref{thm02}, we conclude that the following Corollary holds.
	\begin{cor}
		Under the same conditions of Theorem \ref{thm01}, for any $\epsilon>0$, we have
		\begin{eqnarray}\label{3.14}
			\sum_{n=1}^{\infty}n^{\alpha p-2}l(n)\vv\left\{\left|\sum_{j=1}^{n}X_j\right|>\epsilon n^{\alpha}\right\}<\infty.
		\end{eqnarray}
		Under the conditions of Theorem \ref{thm02}, for any $\epsilon>0$, we have
		\begin{eqnarray}\label{3.15}
			\sum_{n=1}^{\infty}n^{-1}l(n)\vv\left\{\left|\sum_{j=1}^{n}X_j\right|>\epsilon n^{1/p}\right\}<\infty.
		\end{eqnarray}
	\end{cor}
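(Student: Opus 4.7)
The plan is to deduce the corollary from the two main theorems by the standard device of converting complete moment convergence into complete convergence: replace $\epsilon$ by $\epsilon/2$ in the statements and estimate $C_{\vv}\{(|S_n|-(\epsilon/2)n^{\alpha})^{+}\}$ from below by a multiple of $n^{\alpha}\vv\{|S_n|>\epsilon n^{\alpha}\}$. Throughout, write $S_n=\sum_{j=1}^n X_j$.

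For (\ref{3.14}), I will first note that because the random variable $(|S_n|-(\epsilon/2)n^{\alpha})^{+}$ is non-negative, the definition of $C_{\vv}$ reduces to
\[
C_{\vv}\bigl\{(|S_n|-(\epsilon/2)n^{\alpha})^{+}\bigr\}=\int_{0}^{\infty}\vv\bigl(|S_n|>(\epsilon/2)n^{\alpha}+t\bigr)\dif t.
\]
Restricting the integration to $t\in[0,(\epsilon/2)n^{\alpha}]$ and using monotonicity of $\vv$ on the integrand yields
\[
C_{\vv}\bigl\{(|S_n|-(\epsilon/2)n^{\alpha})^{+}\bigr\}\ge \frac{\epsilon n^{\alpha}}{2}\,\vv\bigl(|S_n|>\epsilon n^{\alpha}\bigr).
\]
Multiplying by $n^{\alpha p-2-\alpha}l(n)$ and summing, Theorem \ref{thm01} (applied with $\epsilon/2$ in place of $\epsilon$, which is allowed since the conclusion of Theorem \ref{thm01} holds for \emph{any} positive constant) immediately gives $\sum_n n^{\alpha p-2}l(n)\vv\{|S_n|>\epsilon n^{\alpha}\}<\infty$, up to the harmless factor $\epsilon/2$.

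The argument for (\ref{3.15}) proceeds in exactly the same way, with $n^{1/p}$ playing the role of $n^{\alpha}$ and the weight $h(n)=n^{-1-1/p}l(n)$ replacing $f(n)=n^{\alpha p-2-\alpha}l(n)$; the analogous restriction $t\in[0,(\varepsilon/2)n^{1/p}]$ in the integral representation of $C_{\vv}\{(|S_n|-(\varepsilon/2)n^{1/p})^{+}\}$ produces the extra factor $n^{1/p}$ that converts the exponent $-1-1/p$ into $-1$, and then Theorem \ref{thm02} (applied with $\varepsilon/2$) finishes the proof. Since the derivation is purely a Markov-type truncation of the integral defining $C_{\vv}$, there is no real obstacle: the only subtlety is the trivial verification that the conclusions of Theorems \ref{thm01}--\ref{thm02} hold \emph{uniformly} in the threshold parameter, which is immediate from their statements.
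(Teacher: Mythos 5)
Your proposal is correct and is exactly the standard deduction the paper intends: the paper offers no written proof of the corollary, merely asserting it follows from Theorems \ref{thm01}--\ref{thm02}, and your truncation of the integral defining $C_{\vv}$ over $t\in[0,(\epsilon/2)n^{\alpha}]$ (resp.\ $t\in[0,(\varepsilon/2)n^{1/p}]$) is the canonical way to supply the missing argument. All steps check out, including the exponent bookkeeping $n^{\alpha p-2-\alpha}\cdot n^{\alpha}=n^{\alpha p-2}$ and $n^{-1-1/p}\cdot n^{1/p}=n^{-1}$.
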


	{\bf Acknowledgements}
	
	Not applicable.
	
	{\bf Funding}
	
	This research was supported by Science and Technology Research Project of Jiangxi Provincial Department of Education of China (No. GJJ2201041), Doctoral Scientific Research Starting Foundation of Jingdezhen Ceramic University ( No. 102/01003002031), Re-accompanying Funding Project of Academic Achievements of Jingdezhen Ceramic University (No. 215/20506277).
	
	{\bf Availability of data and materials}
	
	No data were used to support this study.
	
	{\bf Competing interests}
	
	The authors declare that they have no competing interests.
	
	{\bf Authors contributions}
	
	Both authors contributed equally and read and approved the final manuscript.

\end{document}